\tikzset{myboxgroup/.style={draw, densely dotted}} 
\renewcommand*\env@matrix[1][*\c@MaxMatrixCols c]{%
  \hskip -\arraycolsep
  \let\@ifnextchar\new@ifnextchar
  \array{#1}}
\newtheorem{lemma}{Lemma}[section]
\newtheorem{theorem}[lemma]{Theorem}
\newtheorem*{theorem*}{Theorem}
\newtheorem{proposition}[lemma]{Proposition}
\newtheorem{conjecture}[lemma]{Conjecture}
\theoremstyle{definition}
\newtheorem{definitionnodiamond}[lemma]{Definition}
\newtheorem{examplenodiamond}[lemma]{Example}
\newtheorem{remarknodiamond}[lemma]{Remark}
\newtheorem{notationnodiamond}[lemma]{Notation}
\newenvironment{definition}{\begin{definitionnodiamond}}{\hfill\ensuremath\blacklozenge\end{definitionnodiamond}}
\newenvironment{remark}{\begin{remarknodiamond}}{\hfill\ensuremath\blacklozenge\end{remarknodiamond}}
\let\xx@thm\@thm
\numberwithin{equation}{section}
\newcounter{stepofproof}
\crefname{section}{Section}{Sections}
\crefname{subsection}{}{Subsections}
\crefname{definition}{Definition}{Definitions}
\crefname{example}{Example}{Examples}
\crefname{examplenodiamond}{Example}{Examples}
\crefname{remark}{Remark}{Remarks}
\crefname{remarknodiamond}{Remark}{Remarks}
\crefname{convention}{Convention}{Conventions}
\crefname{conjecture}{Conjecture}{Conjectures}
\crefname{lemma}{Lemma}{Lemmas}
\crefname{proposition}{Proposition}{Propositions}
\crefname{corollary}{Corollary}{Corollaries}
\crefname{theorem}{Theorem}{Theorems}
\crefname{assumption}{Assumption}{Assumptions}
\crefname{enumi}{}{}
\crefname{equation}{}{}
\crefname{align}{}{}
\crefname{proofstep}{Step}{Steps}
\newcommand\arXiv[1]{\href{http://arxiv.org/abs/#1}{\nolinkurl{arXiv:#1}}}
\newcommand\MRnumber[1]{\href{http://www.ams.org/mathscinet-getitem?mr=#1}{\nolinkurl{MR#1}}}
\newcommand\DOI[1]{\href{http://dx.doi.org/#1}{\nolinkurl{DOI:#1}}}
\newcommand\MAILTO[1]{\href{mailto:#1}{\nolinkurl{#1}}}
\newcommand\bA{\mathbb A}
\newcommand\bG{\mathbb G}
\newcommand\bP{\mathbb P}
\newcommand\bX{\mathbb X}
\newcommand\bZ{\mathbb Z}
\def\ve{\varepsilon}
\newcommand\cO{\mathcal O}
\def\Proj{\operatorname{Proj}}
\renewcommand\lim{\varprojlim}
\numberwithin{equation}{section}
\title[Flat families of point schemes]{Flat families of point schemes for connected graded
  algebras}
\author[Alex Chirvasitu]{Alex Chirvasitu}
\author[Ryo Kanda]{Ryo Kanda} 	
\address[Alex Chirvasitu]{Department of Mathematics, University at
  Buffalo, Buffalo, NY 14260-2900, USA.}  \email{achirvas@buffalo.edu}
\address[Ryo Kanda]{Department of Mathematics, Graduate School of
  Science, Osaka University, Toyonaka, Osaka, 560-0043, Japan, and
  Department of Mathematics, Box 354350, University of Washington,
  Seattle, WA 98195, USA.}  \email{ryo.kanda.math@gmail.com}
\keywords{non-commutative algebraic geometry}
\subjclass[2010]{14A22, 16S38, 16W50} 
\begin{document}

\pagenumbering{arabic}

\begin{abstract}
  We study truncated point schemes of connected graded algebras as
  families over the parameter space of varying relations for the
  algebras, proving that the families are flat over the open dense
  locus where the point schemes achieve the expected (i.e. minimal)
  dimension.

  When the truncated point scheme is zero-dimensional we obtain its
  number of points counted with multiplicity via a Chow ring
  computation. This latter application in particular confirms a
  conjecture of Brazfield to the effect that a generic two-generator,
  two-relator 4-dimensional Artin-Schelter regular algebra has seventeen truncated point
  modules of length six.
\end{abstract}
                             
\maketitle

\setcounter{tocdepth}{1} 

\section*{Introduction}\label{se.intro}

The context for the present note is that of non-commutative projective
algebraic geometry, in the sense of studying graded algebras and
modules as (analogues of) homogeneous coordinate rings, as
exemplified, for instance, by the seminal paper \cite{AS87}. The
follow-up work of \cite{ATV1,ATV2} introduced novel methods of
handling the difficulties inherent in working with non-commutative
rings by leveraging classical (as opposed to non-commutative)
algebraic geometry to probe the nature of the ``non-commutative projective
schemes'' embodied by the rings in question. We recall the relevant
setup briefly.

To fix ideas and notation, consider an algebraically closed field
$\Bbbk$, an $r$-dimensional vector space $V$ whose dual is spanned by
basis elements $x_i$, $1\le i\le r$, and $s$ multilinear (of degree at least two) forms $f_j$ on $V$. The typical algebra we consider is then
of the form
\begin{equation*}
  A=\frac{T(V^{*})}{I}=\frac{\Bbbk\langle x_{1},\ldots,x_{r}\rangle}{(f_{1},\ldots,f_{s})}
\end{equation*}
(regarding the degree-one generators as elements of the dual $V^*$ is
simply a matter of convention).

A \emph{point module} of $A$ is a graded $A$-module that is cyclic and
has Hilbert series $(1-t)^{-1}$. If $A$ is commutative these
correspond to the closed points of the projective scheme $\Proj A$,
justifying the nomenclature. One of the innovations of
\cite{ATV1} was to introduce a scheme $\Gamma$ whose closed points
parametrize the isomorphism classes of point modules over $A$; this is
the so-called {\it point scheme} of $A$. The scheme $\Gamma$ is the
inverse limit of the \emph{truncated point schemes}
$\{\Gamma_{n}\}_{n}$ defined as follows:

Regard the relations $f_j$ with degrees $d_{j}$ as elements of the
respective tensor powers $(V^*)^{\otimes d_j}$. For every $n\ge 2$ we
define
\begin{equation*}
  \Gamma_n\subset\bP(V)^n\cong\left(\bP^{r-1}\right)^n 
\end{equation*}
to be the zero scheme of the degree-$n$ component $I_n$ of the ideal
$I$ generated by $f_j$s.

The closed points of $\Gamma_{n}$ parametrize the isomorphism classes
of \emph{truncated point modules} of length $n+1$, defined as cyclic
graded $A$-modules with Hilbert series $1+t+t^{2}+\cdots+t^{n}$. If
the number $n$ is larger than or equal to the highest degree of the
defining relations $f_{1},\ldots,f_{s}$, then $\Gamma_{n}$ determines
all truncated point schemes with indices larger than $n$ and hence the
point scheme $\Gamma$.

In the present note we study the behavior of the truncated point
schemes $\Gamma_n$ upon varying the relation space
\begin{equation}\label{eq:rels}
  \mathrm{span}\{f_j\}
\end{equation}
while keeping the degrees $d_j$ of the $f_j$ fixed. In other words, we
regard \Cref{eq:rels} as a point in the relevant product
\begin{equation}\label{eq:g}
  \bG=\prod_{j=1}^s \bP((V^*)^{\otimes d_j})
\end{equation}
of projective spaces, and study $\Gamma_n$ as fibers of a family over
the latter scheme. 

\Cref{th.dns} below shows that under appropriate bounds on the degrees
$d_j$, the locus $U\subset\bG$ over which $\Gamma_n$ has the expected
minimal dimension is open and dense in
$\bG$. Moreover, according to \Cref{th.flt} the resulting family is
flat over $U$. This implies that a suite of algebraic-geometric
invariants one might compute for $\Gamma_n$ (e.g. the arithmetic
genus) stay constant so long as the dimension of $\Gamma_n$ is that
provided by the naive count.

When $\Gamma_{n}$ is zero-dimensional a simple computation in the Chow
ring of $\bP(V)^n$ returns the number of points of $\Gamma_{n}$
counted with multiplicity.  We apply this procedure and our main
results to algebras of the same ``shape'' (i.e. having the same number of generators
and degrees of relations) as the four-dimensional Artin-Schelter
regular algebras listed in \cite[Proposition 1.4]{LPWZ}. There are
three types of such algebras, and in each case we compute the number
of points (counted with multiplicity) of $\Gamma_{n}$ for the smallest number
$n$ such that $\dim(\Gamma_{n})=0$. This includes via \Cref{pr.hlb} a
confirmation in \Cref{pr.12221} of \cite[Conjecture IV.8.1]{brz}:

\begin{conjecture}\label{cj.brz}
  Let $A$ be a connected graded algebra with two degree-one
  generators. If the defining ideal of $A$ is generated by a generic
  cubic and a generic quartic relation, then $\Gamma_5$ consists of
  exactly seventeen distinct points.
\end{conjecture}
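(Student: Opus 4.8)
The plan is to present $\Gamma_5\subset\bP(V)^5=(\bP^1)^5$ (here $r=2$, so $\bP(V)=\bP^1$) as a scheme-theoretic intersection of five explicit hypersurfaces, read off its length from the Chow ring, and then upgrade the count-with-multiplicity to a count of distinct points. With a cubic relation $f_1$ and a quartic relation $f_2$, the degree-five piece $I_5$ is spanned by the windowed products $(V^*)^{\otimes a}f_1(V^*)^{\otimes b}$ with $a+b=2$ together with $(V^*)^{\otimes a}f_2(V^*)^{\otimes b}$ with $a+b=1$. A local computation on the standard affine charts shows that the vanishing of all of these is cut out exactly by the five equations
\begin{align*}
  f_1(p_1,p_2,p_3)&=f_1(p_2,p_3,p_4)=f_1(p_3,p_4,p_5)=0,\\
  f_2(p_1,p_2,p_3,p_4)&=f_2(p_2,p_3,p_4,p_5)=0,
\end{align*}
of respective multidegrees $(1,1,1,0,0)$, $(0,1,1,1,0)$, $(0,0,1,1,1)$, $(1,1,1,1,0)$ and $(0,1,1,1,1)$. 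Since $(\bP^1)^5$ is five-dimensional and we have five equations, the expected dimension of $\Gamma_5$ is $0$, so after verifying its degree hypotheses in the present shape I would invoke \Cref{th.dns} to place the generic cubic and quartic of the conjecture in the dense open locus $U\subseteq\bG$ on which $\dim\Gamma_5=0$.

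On this locus \Cref{pr.hlb} computes the length of $\Gamma_5$ as an intersection number in $A^*((\bP^1)^5)=\bZ[h_1,\dots,h_5]/(h_1^2,\dots,h_5^2)$, where $h_i$ is the hyperplane class of the $i$-th factor: it is the coefficient of $h_1h_2h_3h_4h_5$ in
\[
  (h_1{+}h_2{+}h_3)(h_2{+}h_3{+}h_4)(h_3{+}h_4{+}h_5)(h_1{+}h_2{+}h_3{+}h_4)(h_2{+}h_3{+}h_4{+}h_5).
\]
Because $h_i^2=0$, expanding this product and keeping only the squarefree top monomial turns the computation into the permanent of the $\{0,1\}$ incidence matrix
\[
  \begin{pmatrix}
    1&1&1&0&0\\ 0&1&1&1&0\\ 0&0&1&1&1\\ 1&1&1&1&0\\ 0&1&1&1&1
  \end{pmatrix}
\]
recording which $h_k$ occurs in which factor. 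Column $h_1$ is supported only on rows $1,4$ and column $h_5$ only on rows $3,5$; fixing which of these rows supplies $h_1$ and $h_5$ breaks the permanent into the four sub-counts $6$, $4$, $4$, $3$, so the length of $\Gamma_5$ is $17$.

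It remains to see that these seventeen are distinct, i.e. that the generic $\Gamma_5$ is reduced. By \Cref{th.flt} the family $\{\Gamma_5\}$ is flat over the irreducible base $U$, so the fiber length is constantly $17$ and the locus of $U$ whose fiber is $17$ reduced points is open; it is therefore enough to produce a single $(f_1,f_2)\in U$ for which $\Gamma_5$ consists of $17$ distinct points, after which the generic fiber is forced to be reduced and the conjecture follows. I expect this last step to be the crux: the $0$-dimensionality is furnished by \Cref{th.dns} and the permanent is a finite calculation, but reducedness is not formal, since the five hypersurfaces share variables and have no reason to meet transversally a priori. I would settle it by exhibiting one explicit generic cubic and quartic—checked by a direct resultant or Gr\"obner computation—whose seventeen points are pairwise distinct, and then propagating reducedness to the generic member through the flat family.
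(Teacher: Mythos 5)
Your route is essentially the paper's own (this is \Cref{pr.12221}): place the generic pair in the open dense locus $U$ of \Cref{th.dns}, identify the length of the zero-dimensional fiber with the Chow class of the intersection of the five multilinearized hypersurfaces, extract the coefficient of the squarefree top monomial, and then use flatness plus openness of the reduced locus to upgrade ``seventeen with multiplicity'' to ``seventeen distinct.'' Your permanent computation ($6+4+4+3=17$) is correct and matches the paper's count. Two remarks. First, a small misattribution: \Cref{pr.hlb} does not compute the length --- that identification of the product of Chow classes with the class of the scheme-theoretic intersection rests on Fulton's Example 8.2.1 as invoked in the proof of \Cref{pr.14641}; what \Cref{pr.hlb} supplies is precisely the openness-and-density of the reduced locus $W$, via the classifying map to $\mathrm{Hilb}^{17}_{(\bP^1)^5}$ and the openness of the stratum of distinct points, so you should lean on it for the step you state only as ``the locus \dots is open.'' Second, and more substantively, you leave the crux --- exhibiting one reduced fiber --- as an unexecuted Gr\"obner or resultant computation. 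The paper closes this with no computation at all: take $f_1$ and $f_2$ to be products of linear forms on $\bP^1$ with pairwise distinct zeros, as in \Cref{le.opn}. Each multilinearized equation then splits as a union of ``coordinate hyperplanes'' $\{p_k=q\}$, the joint zero locus is a finite union of points cut out by linear conditions in general position, and reducedness is visible by inspection. Until you either run your proposed computation or substitute this degenerate example, your argument has a genuine (if clearly flagged and easily repaired) gap at exactly the point you identify as the crux.
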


\subsection*{Acknowledgements}
We would like to thank Michaela Vancliff for bringing to our attention
the thesis \cite{brz}, Tom Cassidy for making a copy available to us,
and S. Paul Smith for his suggestions for improving an initial draft
of this paper and his insightful remarks on the topics touched on
here.

A. C. is grateful for partial support through NSF grant DMS-1565226.

R. K. was a JSPS Overseas Research Fellow and supported by JSPS
KAKENHI Grant Numbers JP17K14164 and JP16H06337.

\section{Main results}\label{se.gm}

Fix positive integers $r$ and $s$. We consider the following family of algebras associated to an $s$-tuple:

\begin{definition}\label{def.type}
  For a tuple
  \begin{equation*}
    {\bf d} = (d_1\le \cdots\le d_s)
  \end{equation*}
  with $d_j\ge 2$ an {\it algebra of type $(r,{\bf d})$} is a
  connected graded algebra with $r$ degree-one generators and $s$
  relations of degrees $d_{1},\ldots,d_{s}$.
\end{definition}

We retain the notations in the introduction, and focus on $\Gamma_n$ for $n\ge d_s$, henceforth referred to as
the {\it stable range} for $n$. Note that for stable $n$ the scheme
$\Gamma_n$ is defined as the joint zero locus in $\bP(V)^n$ of
\begin{equation}\label{eq:nr-eqns}
  \sum_{j=1}^s (n-d_j+1)
\end{equation}
multilinear equations whose respective degrees are indicated by the
summands of \Cref{eq:nr-eqns}: $n-d_j+1$ equations of degree $d_j$.

\begin{definition}\label{def.def}
  Given $r$, ${\bf d}$ and $n$ as above, the {\it defect}
  $\mathrm{df}(r,{\bf d},n)$ attached to this data is the sum
  \Cref{eq:nr-eqns}.
\end{definition}

Recall that we are denoting by $\bG$ the space \Cref{eq:g} of
relations for type-$(r,{\bf d})$ algebras. It is, in other words, the
variety of tuples of homogeneous polynomials $f_j$ of prescribed
degrees $d_j$ up to scaling. Now consider the closed subscheme
$\bX_{n}$ of $\bG\times\bP(V)^{n}$ defined by
\begin{equation*}
	\{\,((f_{j})_{j},(a_{i})_{i})\in\bG\times\bP(V)^{n}\mid\,\text{$f_{j}(a_{i+1},\ldots,a_{i+d_{j}})=0$ for $1\leq j\leq s$ and $0\leq i\leq n-d_{j}$}\}.
\end{equation*}
The fiber $(\bX_n)_R$ at $R\in \bG$ along the projection $\pi\colon\bX_n\to\bG$
is the scheme $\Gamma_n$ attached to the
type-$(r,{\bf d})$ algebra $TV^*/(R)$.

Our main results are as follows. First, we have the following
observation to the effect that $\Gamma_n$ has ``expected dimension''
generically.

\begin{theorem}\label{th.dns}
  Fix $r$, ${\bf d}$ and $n$, and suppose the associated defect
  $\mathrm{df}$ is $\le n(r-1)$. Then,
  \begin{enumerate}
  \item For each $R\in\bG$, $\Gamma_n=(\bX_n)_R$ is non-empty, and all
    components have dimension $\ge n(r-1)-\mathrm{df}$;
  \item The locus $U$ of $R\in \bG$ where all components of $\Gamma_{n}$ have dimension $n(r-1)-\mathrm{df}$ is open and dense.
  \end{enumerate}
\end{theorem}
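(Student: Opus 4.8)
The plan is to realize $\Gamma_n = (\bX_n)_R$ as the intersection of $\mathrm{df}$ hypersurfaces inside the $n(r-1)$-dimensional projective variety $\bP(V)^n$, and then invoke two complementary dimension-theoretic principles. For part (1), the lower bound on the dimension of every component is a consequence of Krull's height theorem (equivalently, the projective dimension theorem): cutting a variety of pure dimension $n(r-1)$ by a single hypersurface drops the dimension by at most one, so after imposing $\mathrm{df}$ equations every nonempty component has dimension at least $n(r-1) - \mathrm{df}$. The nonemptiness, given that the defect satisfies $\mathrm{df} \le n(r-1)$, follows because $\bP(V)^n$ is a projective variety and the intersection of $\mathrm{df} \le \dim \bP(V)^n$ ample (hence numerically positive) hypersurfaces on a projective variety cannot be empty — here I would cite the fact that on a projective variety each of the multilinear forms $f_j$ defines a divisor whose class is a positive combination of the hyperplane classes in the individual $\bP(V)$ factors, and the intersection product of $\mathrm{df}$ such classes against the fundamental class is strictly positive in the Chow ring, forcing a nonempty zero locus.

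For part (2), I would first establish openness. Consider the projection $\pi\colon \bX_n \to \bG$. Since the fiber dimension is an upper-semicontinuous function on the source of a morphism of finite type (by Chevalley's theorem on the upper-semicontinuity of fiber dimension), the locus in $\bX_n$ where $\dim_{x}(\pi^{-1}(\pi(x))) \le n(r-1) - \mathrm{df}$ is open; combined with the lower bound from part (1), which forces every fiber component to have dimension \emph{at least} $n(r-1)-\mathrm{df}$, the locus $U \subseteq \bG$ where all components achieve exactly $n(r-1)-\mathrm{df}$ is the image complement of a closed set and is therefore open. The delicate point is that I need openness in the base $\bG$, not merely in $\bX_n$, so I would push forward using properness of $\pi$ (the fibers sit inside the projective $\bP(V)^n$, so $\pi$ is projective hence proper), which sends closed sets to closed sets and lets me descend the semicontinuity to $\bG$.

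The harder half is \emph{density} of $U$, for which I expect the main obstacle to lie. It suffices to exhibit a single relation tuple $R_0 \in \bG$ whose truncated point scheme $\Gamma_n$ has all components of the minimal dimension $n(r-1)-\mathrm{df}$, since then $U$ is a nonempty open subset of the irreducible variety $\bG$ and is automatically dense. The natural strategy is to choose $R_0$ so that the defining forms are in sufficiently general position — for instance built from generic monomials or a generic linear combination that decouples the multilinear equations across the factors of $\bP(V)^n$ — and then argue by a Bertini-type or inductive transversality argument that the successive hyperplane sections each genuinely drop the dimension by one. Concretely, I anticipate building $\Gamma_n$ one equation at a time and checking that no component of the partial intersection is forced to lie inside the next hypersurface; the degree bound $\mathrm{df} \le n(r-1)$ guarantees there is enough ``room'' for this to succeed.

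The technical crux is verifying that a generic choice of the multilinear forms $f_j$ avoids the degenerate configurations in which several equations become dependent along a high-dimensional locus — this is exactly where the multilinear (rather than arbitrary) structure of the equations, and the precise shape of $\bG$ as a product of projective spaces of forms, must be used to rule out excess intersection.
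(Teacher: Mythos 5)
Your overall architecture matches the paper's: the lower bound via Krull/the projective dimension theorem, nonemptiness via a Chow-ring nonvanishing, openness via semicontinuity of fiber dimension pushed down through the proper map $\pi$, and density via irreducibility of $\bG$ plus one good example. However, two of your steps are asserted where the real work lies, and one assertion is false as stated.

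First, for nonemptiness you claim that ``the intersection product of $\mathrm{df}$ such classes against the fundamental class is strictly positive'' because each class is a positive combination of the hyperplane classes $\ve_i$. This is not true for arbitrary such classes: on $\bP(V)^n$ the class $\ve_1$ is a positive combination of hyperplane classes, yet $\ve_1^{r}=0$, so $\mathrm{df}\le n(r-1)$ nef classes can certainly have vanishing product. The hypersurfaces here are nef but not ample (each multilinearization $f_{i,j}$ only involves $d_j$ of the $n$ factors, so its class is $\ve_{i+1}+\cdots+\ve_{i+d_j}$, pulled back from a sub-product). The nonvanishing of the product
$\prod_{j}\prod_{i=0}^{n-d_j}(\ve_{i+1}+\cdots+\ve_{i+d_j})$
is a genuine combinatorial fact that depends on the specific staircase pattern of which $\ve_i$ appear in which factor; the paper isolates it as \Cref{le.prod.nonvan} and proves it by an induction on $n$ that peels off $\ve_1^{r-1}$ and reduces the degree tuple. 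Your proposal is missing this argument entirely, and the heuristic you offer in its place would ``prove'' too much.

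Second, for density you correctly reduce to exhibiting a single $R_0$ with minimal-dimensional $\Gamma_n$, but the Bertini-type ``add one hypersurface at a time'' strategy does not directly apply: the $\mathrm{df}$ equations are not independently generic, since for each $j$ all $n-d_j+1$ multilinearizations $f_{i,j}$ are determined by the single form $f_j$, so you cannot perturb them separately to achieve transversality one step at a time. You flag this tension yourself but do not resolve it. The paper's \Cref{le.opn} sidesteps it with an explicit construction: take each $f_j=\prod_{i}\ell_{i,j}$ to be a product of linear forms in general position, so that the joint zero locus of all multilinearizations breaks into components cut out by $\mathrm{df}$ independent linear conditions on the factors of $\bP(V)^n$, each visibly of dimension $n(r-1)-\mathrm{df}$. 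Some such concrete witness (or a genuinely different transversality argument adapted to the constrained family) is needed to close this gap.
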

\begin{proof}
  We prove the two claims separately.

  \vspace{.5cm}

  {\bf (1)} As observed above, $\Gamma_n$ is by definition the
  scheme-theoretic intersection of $\mathrm{df}$ hypersurfaces in the
  $n(r-1)$-dimensional scheme $\bP(V)^n$, so the lower bound
  $n(r-1)-\mathrm{df}$ for the dimensions of the components is a
  consequence, for instance, of \cite[Proposition I.7.1]{Hart}: that
  result is stated for subschemes of affine space, but our scheme
  $\bP(V)^n$ admits a cover by open patches isomorphic to $\bA^n$.

  The non-emptiness follows from the following computation in the Chow
  ring $A^*=A^*(\bP(V)^n)$. According to the K\"unneth theorem for
  Chow rings (e.g. \cite[Propositions 1 and 2]{tot-ch}) $A^*$ is
  isomorphic to the $n^{th}$ tensor power of $A^*(\bP(V))$, which is
  simply $\bZ[\ve]/(\ve^r)$ for the class $\ve$ of a hyperplane:
  \begin{equation*}
    A^*\cong \bigotimes_{i=1}^n \bZ[\ve_i]/(\ve_i^r). 
  \end{equation*}
  Now consider the multilinearizations $f_{i,j}$ of $f_j$ on
  $\bP(V)^{ n}$ with $0\le i\le n-d_j$. $\Gamma_n$ is the
  intersection of the respective zero loci $V_{i,j}$ of $f_{i,j}$,
  represented in the Chow ring by sums of the form
  \begin{equation}\label{eq:clss}
    \ve_{i+1}+\ve_{i+2}+\cdots+\ve_{i+d_j}.
  \end{equation}
  The product of the elements \Cref{eq:clss} in the Chow ring will be
  shown to be non-zero in \cref{le.prod.nonvan} below. In turn, this
  then implies that the intersection of the schemes $V_{i,j}$
  represented by \Cref{eq:clss} is non-empty.

  To verify this last point, recall e.g. from
  \cite[$\S$8.1]{Fulton-2nd-ed-98} that the product
  \begin{equation*}
    \prod_{j=1}^s \prod_{i=0}^{n-d_j}[V_{i,j}] 
  \end{equation*}
  can be obtained as the pushforward through
  \begin{equation*}
    \bigcap_{j=1}^s \bigcap_{i=0}^{n-d_j} V_{i,j}\to \bP(V)^{ n}
  \end{equation*}
  of an element in the Chow ring of the left hand intersection (see
  especially \cite[Example 8.1.9]{Fulton-2nd-ed-98}). If this
  intersection were trivial then the element in question would vanish,
  hence the conclusion.
  
  \vspace{.5cm}

  {\bf (2)} Since $\bG$ is irreducible, it will be sufficient to prove
  that $U$ is open and non-empty. We relegate non-emptiness to
  \Cref{le.opn} below, and assuming it, we focus here on proving
  openness.
  
  We denote $\bX=\bX_n$ for simplicity and write $\bX_i$ for the
  irreducible components of $\bX$. We can then apply \cite[Exercise
  II.3.22]{Hart} to each restriction and corestriction
  \begin{equation*}
    \pi_i:(\bX_i)_{\mathrm{red}}\to \pi(\bX_i)_{\mathrm{red}}. 
  \end{equation*}
  Part (d) of said exercise shows that the set of points of $\bX_i$
  lying in a fiber of $\pi_i$ of dimension $\ge n(r-1)-\mathrm{df}+1$
  is closed, and hence its image $F_i$ through the projective morphism
  $\pi_i$ is also closed. Since $U$ is the complement of the closed
  finite union $\bigcup F_i$, it is open.
\end{proof}

\begin{lemma}\label{le.prod.nonvan}
	In the context of \cref{th.dns},
	\begin{equation*}
		\prod_{j=1}^{s}\left(\prod_{i=0}^{n-d_{j}}(\ve_{i+1}+\cdots+\ve_{i+d_{j}})\right)
	\end{equation*}
	is a nonzero element of the ring $\bigotimes_{i=1}^{n}\bZ[\ve_{i}]/(\ve_{i}^{r})$.
\end{lemma}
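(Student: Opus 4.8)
The plan is to exploit the fact that every factor $\ve_{i+1}+\cdots+\ve_{i+d_j}$ has unit (in particular nonnegative) coefficients, so that expanding the product can produce no cancellation. Concretely, distributing the product yields a sum, over all ways of selecting a single variable $\ve_{c(F)}$ out of each of the $\mathrm{df}$ factors $F$, of the resulting monomial $\prod_F \ve_{c(F)}$, and each such monomial enters with a positive coefficient. After reduction modulo the relations $\ve_i^r$, a monomial $\prod_i \ve_i^{a_i}$ survives precisely when $a_i\le r-1$ for all $i$. Since there is no subtraction anywhere, the product is nonzero if and only if there is \emph{some} selection $c$ for which each index $i\in\{1,\ldots,n\}$ is chosen at most $r-1$ times.

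First I would recast this as a combinatorial assignment problem. Regard each factor as the interval $[i+1,i+d_j]\subseteq\{1,\ldots,n\}$ of indices it involves, and seek an assignment sending each such interval to one of the indices it contains, subject to no index receiving more than $r-1$ intervals. This is a system of representatives with capacity $r-1$ on each index, and by the capacitated form of Hall's marriage theorem (equivalently, by max-flow--min-cut after replacing each index by $r-1$ parallel copies) such an assignment exists if and only if, for every set $S$ of factor-intervals, one has $|S|\le (r-1)\,|N(S)|$, where $N(S)$ denotes the union of the intervals in $S$.

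The heart of the matter is deducing this Hall condition from the hypothesis $\mathrm{df}\le n(r-1)$. The key structural point is that, our factors being intervals, $N(S)$ is a disjoint union of maximal runs $B_1,\ldots,B_k$ of consecutive indices, and each interval of $S$ lies entirely inside a single run; hence $|S|\le\sum_t g(|B_t|)$, where $g(m):=\sum_{j:\,d_j\le m}(m-d_j+1)$ counts all factor-intervals contained in a block of $m$ consecutive indices. It therefore suffices to prove $g(m)\le (r-1)m$ for every $1\le m\le n$. For this I would observe that the increments $g(m+1)-g(m)=\#\{j:d_j\le m+1\}$ are non-decreasing, so that $g$ is convex with $g(0)=0$; consequently $g(m)/m$ is non-decreasing in $m$ and is bounded above by $g(n)/n=\mathrm{df}/n\le r-1$, since $g(n)=\sum_j(n-d_j+1)=\mathrm{df}$ in the stable range.

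The step I expect to be the main obstacle is the combinatorial reduction to block-counting, namely recognizing that the extremal subsets $S$ for Hall's condition are exactly the families of \emph{all} intervals contained in a fixed block, and packaging this decomposition cleanly. Once that is in place, the convexity estimate $g(m)/m\le g(n)/n$ is routine, and the passage back from the assignment to a surviving monomial --- and hence to the nonvanishing of the product --- is immediate.
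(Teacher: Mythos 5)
Your proof is correct, but it takes a genuinely different route from the paper's. Both arguments hinge on the same basic observation that every factor has nonnegative coefficients, so there is no cancellation and nonvanishing is equivalent to the existence of a single surviving monomial, i.e.\ of a choice of one variable from each factor with no index chosen more than $r-1$ times. From there the paper proceeds by induction on $n$: it first pads the tuple with degree-$n$ relations to force $\mathrm{df}=n(r-1)$, checks that at most $r-1$ of the $d_j$ equal $1$, and then bounds the product below (in the coefficientwise order) by $\ve_1^{r-1}$ times the analogous product for a shifted tuple $(d_1',\ldots,d_s')$ in the variables $\ve_2,\ldots,\ve_n$, whose defect is exactly $(n-1)(r-1)$, so that the induction hypothesis applies. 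You instead recast the existence of a surviving monomial as a capacitated system-of-representatives problem and verify Hall's condition directly: the interval structure of the factors lets you decompose $N(S)$ into maximal runs and reduce everything to the estimate $g(m)\le (r-1)m$, which follows from convexity of $g$ together with $g(n)=\mathrm{df}\le n(r-1)$. Your version buys an exact combinatorial criterion for nonvanishing (Hall's condition is necessary as well as sufficient) and handles $\mathrm{df}<n(r-1)$ without any padding step, at the cost of invoking the capacitated marriage theorem; the paper's induction is more elementary and self-contained, essentially extracting one explicit greedy choice rather than certifying that some choice exists. One small point worth making explicit in your write-up: each interval of $S$ is a set of consecutive indices contained in $[1,n]$, so the count of \emph{all} factor-intervals inside a block $B_t\subseteq[1,n]$ of size $m$ really is $g(m)=\sum_{j:\,d_j\le m}(m-d_j+1)$, which is where the stable-range hypothesis $n\ge d_s$ enters via $g(n)=\mathrm{df}$.
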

\begin{proof}
  We will prove the statement for all
  $\mathbf{d}=(d_{1},\ldots,d_{s})$ and $n$ with the milder restriction
  $1\leq d_{j}\leq n$ for all $j$ and the same assumption
  $\mathrm{df}\leq n(r-1)$. We may assume
  $1\leq d_{1}\leq\cdots\leq d_{s}\leq n$ without loss of generality.
	
  If we append $d_{s+1}=n$ at the end of $\mathbf{d}$, then the defect
  is increased by one and the element in question is multiplied by
  $\ve_{1}+\cdots+\ve_{n}$. By applying this operation as many times
  as necessary, we can assume $\mathrm{df}=n(r-1)$. Then the
  inequality $r-1\leq s$ follows from
	\begin{equation*}
		n(r-1)=\sum_{j=1}^{s}(n-d_{j}+1)\leq\sum_{j=1}^{s}n=ns.
	\end{equation*}
	
	The number of $j$ with $d_{j}=1$ is at most $r-1$. Indeed, if
        $1=d_{1}=\cdots=d_{r-1}$, then
	\begin{equation*}
		n(r-1)=\sum_{j=1}^{s}(n-d_{j}+1)=n(r-1)+\sum_{j=r}^{s}(n-d_{j}+1)
	\end{equation*}
	and $n-d_{j}+1\geq 1$. Hence $s=r-1$ in this case.
	
	Now we complete the proof by induction on $n$. If $n=1$, then
        $d_{1}=\cdots=d_{s}=1$ and hence $s=r-1$. The element in
        question is $\ve^{r-1}_{1}\neq 0$.
	
	Let $n\geq 2$. For two elements
        $P_{1},P_{2}\in\bigotimes_{i=1}^{n}\bZ[\ve_{i}]/(\ve_{i}^{r})=\bZ[\ve_{1},\ldots,\ve_{n}]/(\ve_{1}^{r},\ldots,\ve_{n}^{r})$,
        we write $P_{1}\leq P_{2}$ if $P_{2}-P_{1}$ is represented by
        a polynomial whose coefficients are all nonnegative. Then we
        have
	\begin{align*}
          \prod_{j\leq r-1}\left(\prod_{i=0}^{n-d_{j}}(\ve_{i+1}+\cdots+\ve_{i+d_{j}})\right)
          &\geq\prod_{j\leq r-1}\left(\ve_{1}\prod_{i=1}^{n-d_{j}}(\ve_{i+1}+\cdots+\ve_{i+d_{j}})\right),\quad\text{and}\\
          \prod_{j\geq r}\left(\prod_{i=0}^{n-d_{j}}(\ve_{i+1}+\cdots+\ve_{i+d_{j}})\right)
          &\geq\prod_{j\geq r}\left(\prod_{i=0}^{n-d_{j}}(\ve_{i+2}+\cdots+\ve_{i+d_{j}})\right).
	\end{align*}
	Therefore
	\begin{equation}\label{eq.prod.nonvan}
		\prod_{j=1}^{s}\left(\prod_{i=0}^{n-d_{j}}(\ve_{i+1}+\cdots+\ve_{i+d_{j}})\right)
		\geq\ve_{1}^{r-1}\prod_{j=1}^{s}\left(\prod_{i=0}^{(n-1)-d'_{j}}(\ve_{i+2}+\cdots+\ve_{i+1+d'_{j}})\right)
	\end{equation}
	where $d'_{j}=d_{j}$ for $j\leq r-1$ and $d'_{j}=d_{j}-1$ for
        $j\geq r$. The right-hand side of \cref{eq.prod.nonvan} is of
        the form $\ve_{1}^{r-1}P$, where $P$ is the element in
        question for the tuple $(d'_{1},\ldots,d'_{s})$ in variables
        $\ve_{2},\ldots,\ve_{n}$. Since the defect for this new tuple
        is
	\begin{equation*}
		\sum_{j=1}^{s}((n-1)-d'_{j}+1)=\sum_{j=1}^{s}(n-d_{j}+1)-(r-1)=(n-1)(r-1),
	\end{equation*}
	the induction hypothesis implies that $P$ is a nonzero element
        of
        $\bZ[\ve_{2},\ldots,\ve_{n}]/(\ve_{2}^{r},\ldots,\ve_{n}^{r})$. Therefore
        both sides of \cref{eq.prod.nonvan} are $\geq 0$ and
        nonzero. This completes the proof.
\end{proof}

\begin{lemma}\label{le.opn}
  In the context of \Cref{th.dns} there are choices of relations
  $f_j$, $1\le j\le s$ for which all components of $\Gamma_n$ achieve
  the lower dimension bound of $n(r-1)-\mathrm{df}$.
\end{lemma}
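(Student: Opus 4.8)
The plan is to establish the nonemptiness of $U$ by exhibiting a single relation vector $R=(f_j)_j$ for which $\dim\Gamma_n=n(r-1)-\mathrm{df}$; by part (1) of \cref{th.dns} every component already has dimension $\ge n(r-1)-\mathrm{df}$, so it suffices to force the reverse inequality for the whole scheme. Equivalently, writing $\bX_n$ for the incidence variety and $\pi\colon\bX_n\to\bG$ for the projection, it is enough to show that the tautological section cutting out $\bX_n$ inside $\bG\times\bP(V)^n$ is regular, i.e. that $\dim\bX_n=\dim\bG+n(r-1)-\mathrm{df}$; then the generic fibre of $\pi$ has the expected dimension and, combined with the uniform lower bound, so do all of its components.

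First I would reduce to the ``square'' case $\mathrm{df}=n(r-1)$, where the claim is simply that $\Gamma_n$ is finite. Given a general datum with $m:=n(r-1)-\mathrm{df}\ge0$, I append $m$ relations of degree $n$; this raises the defect to $n(r-1)$ without leaving the stable range. The point of this move is that each appended relation contributes a single equation of class $\ve_1+\cdots+\ve_n$, the hyperplane class of the Segre embedding, so its zero locus is a member of the very ample system $|\mathcal{O}(1,\ldots,1)|$. A general such member meets every positive-dimensional component of $\Gamma_n(f)$ properly, dropping its dimension by exactly one (a general hyperplane section after Segre). Hence, once I produce relations for the augmented tuple with $\Gamma_n$ finite, upper semicontinuity of fibre dimension together with this Bertini-type cutting forces the original $\Gamma_n(f)$ to have every component of dimension exactly $m$.

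For the square case I would induct on $n$, mirroring the bookkeeping of \cref{le.prod.nonvan}. Consider the projection $\mathrm{pr}\colon\Gamma_n\to\bP(V)^{n-1}$ forgetting the last point $a_n$. The only equations involving $a_n$ are the $s$ terminal windows $f_j(a_{n-d_j+1},\ldots,a_n)=0$, each linear in $a_n$; thus the fibre of $\mathrm{pr}$ over $(a_1,\ldots,a_{n-1})$ is the linear subspace of $a_n\in\bP(V)$ annihilated by the forms $w_j=f_j(a_{n-d_j+1},\ldots,a_{n-1},-)\in V^{*}$, which is a single point exactly when $\mathrm{span}\{w_j\}$ is $(r-1)$-dimensional. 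The base $\bP(V)^{n-1}$ then carries the remaining windows together with the determinantal locus $\dim\mathrm{span}\{w_j\}\le r-1$; a codimension count shows these amount to the expected number $(n-1)(r-1)$ of conditions, matching a point-scheme problem on $n-1$ points, to which I would apply the inductive hypothesis. The base case of small $n$ (all relations forced to degree one) is the intersection of $r-1$ generic hyperplanes in $\bP(V)$, a single point.

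The hard part will be the inductive step of the square case, precisely at the determinantal locus. The reduced problem on $\bP(V)^{n-1}$ is not literally a smaller truncated point scheme: the rank-drop condition $\dim\mathrm{span}\{w_j\}\le r-1$ replaces the missing windows, and it is only for a sufficiently generic choice of the $f_j$ that this locus meets the remaining window-variety in exactly the expected codimension. Establishing that transversality is the real content; its combinatorial shadow is exactly the nonvanishing of the Chow product in \cref{le.prod.nonvan}, and the work is to upgrade that statement about classes into a statement about actual dimensions. I expect to do this by committing to an explicit generic choice of relations and tracking the successive projections, where the window-sharing constraint (the same $f_j$ recurring in every window) is what blocks a naive appeal to genericity and makes the estimate delicate.
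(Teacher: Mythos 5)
Your proposal is a plan rather than a proof: the decisive step is explicitly left open. You reduce to the square case $\mathrm{df}=n(r-1)$ and set up an induction on $n$ via the projection forgetting the last coordinate, but you then concede that the inductive step --- controlling the codimension of the determinantal locus where $\dim\mathrm{span}\{w_j\}$ drops, and its intersection with the remaining window conditions --- is ``the real content,'' and that you ``expect'' to handle it by some unspecified explicit choice of relations. Since \cref{le.opn} asks precisely for the exhibition of one such choice together with a verification of the dimension count, deferring that exhibition means the lemma has not been established. (The outer reductions you do carry out are sound: cutting by $m$ general members of $|\cO(1,\ldots,1)|$ legitimately reduces the upper bound $\dim\Gamma_n\le m$ to finiteness of an augmented square problem, and part (1) of \Cref{th.dns} supplies the matching componentwise lower bound.)

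The gap closes with a much simpler construction, which is what the paper does: take each $f_j=\prod_{i=1}^{d_j}\ell_{i,j}$ to be a product of $d_j$ linear forms on $\bP(V)$ in general position, so that any $r$ of the hyperplanes $Z(\ell_{i,j})$ have empty intersection. Each multilinearized equation then has zero locus equal to a union of $d_j$ divisors, each the pullback of a single hyperplane from a single factor of $\bP(V)^n$; consequently the joint zero locus is a finite union of loci obtained by imposing $\mathrm{df}$ linear conditions distributed among the $n$ factors. By general position, a factor receiving $k\le r-1$ of these conditions contributes codimension exactly $k$, while a factor receiving $r$ or more kills that component outright, so every nonempty component has codimension exactly $\mathrm{df}$. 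No induction, no Bertini argument, and no determinantal analysis is needed; the ``window-sharing'' difficulty you identify simply does not arise for split relations, because each window of a product of linear forms decouples into conditions on individual factors.
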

\begin{proof}
  Simply select the forms $f_j$ to be of the form
  \begin{equation*}
    f_j=\prod_{i=1}^{d_j} \ell_{i,j}
  \end{equation*}
  for linear forms $\ell_{i,j}$ on $\bP(V)$, chosen so that the zero
  locus of any $r$ is empty (i.e. the zero loci
  $Z(\ell_{i,j})$ are in general position in $\bP(V)$).

  The components of the joint zero locus of the multilinearizations of
  the $f_j$ are obtained by imposing $\mathrm{df}$ linear constraints
  on the $n$ components of points in $\bP(V)^{ n}$, and the fact
  that such components have the requisite dimension
  $n(r-1)-\mathrm{df}$ follows from the generic choice of
  $\ell_{i,j}$.
\end{proof}

Additionally, the following result ensures that when $\Gamma_n$ has
the expected size, various invariants such as multi-degrees as
subschemes of products of projective spaces, genus, etc. remain
constant. The result is analogous to \cite[Theorem 4.4]{cs-chl}, and
its proof is similarly based on \cite[Theorem 18.16]{Ebud95}.

\begin{theorem}\label{th.flt}
  The restriction of the family $\bX_n\to \bG$ to the open dense
  subscheme $U\subseteq \bG$ from \Cref{th.dns} is flat.
\end{theorem}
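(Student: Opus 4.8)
The plan is to deduce flatness from the ``miracle flatness'' criterion of \cite[Theorem 18.16]{Ebud95}, exactly in the spirit of \cite[Theorem 4.4]{cs-chl}. Recall that this criterion says that a local homomorphism $\mathcal{O}_{U,R}\to\mathcal{O}_{\bX_n,x}$ is flat as soon as the source is regular, the target is Cohen--Macaulay, and the dimensions satisfy $\dim\mathcal{O}_{\bX_n,x}=\dim\mathcal{O}_{U,R}+\dim\mathcal{O}_{(\bX_n)_R,x}$. Verifying this at every point $x$ of $\widetilde{\bX}:=\pi^{-1}(U)$ lying over a point $R\in U$ will give flatness of $\widetilde{\bX}\to U$, which is the assertion. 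Two of the three inputs are immediate. First, $U$ is a nonempty open subscheme of the smooth irreducible variety $\bG=\prod_{j}\bP((V^{*})^{\otimes d_{j}})$, so it is regular and $\dim\mathcal{O}_{U,R}=\dim\bG$ for every $R\in U$. Second, by \Cref{th.dns} the fibre $(\bX_n)_R=\Gamma_n$ over any $R\in U$ is equidimensional with all components of dimension $n(r-1)-\mathrm{df}$, so $\dim\mathcal{O}_{(\bX_n)_R,x}=n(r-1)-\mathrm{df}$ at each such $x$.

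What remains is to produce the Cohen--Macaulay property of $\widetilde{\bX}$ together with the value $\dim\mathcal{O}_{\widetilde{\bX},x}=\dim\bG+n(r-1)-\mathrm{df}$; I would obtain both simultaneously by exhibiting $\widetilde{\bX}$ as a complete intersection. The ambient scheme $U\times\bP(V)^{n}$ is smooth of dimension $\dim\bG+n(r-1)$, and $\widetilde{\bX}$ is defined inside it by the $\mathrm{df}=\sum_{j}(n-d_{j}+1)$ multilinear equations $f_{j}(a_{i+1},\ldots,a_{i+d_{j}})=0$. Hence, the moment I know that $\widetilde{\bX}$ has pure codimension exactly $\mathrm{df}$, these equations automatically form a regular sequence in each local ring of the regular ambient space; $\widetilde{\bX}$ is then Cohen--Macaulay, with $\dim\mathcal{O}_{\widetilde{\bX},x}=\dim\bG+n(r-1)-\mathrm{df}$ everywhere, and the required dimension identity holds on the nose.

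The crux is therefore the purity claim: every irreducible component $Z$ of $\widetilde{\bX}$ has dimension $\dim\bG+n(r-1)-\mathrm{df}$. The lower bound $\dim Z\ge\dim\bG+n(r-1)-\mathrm{df}$ is Krull's principal ideal theorem applied to the $\mathrm{df}$ defining equations. For the reverse inequality I would restrict $\pi$ to $Z$ and let $T:=\overline{\pi(Z)}\subseteq\bG$ be its closure. The constructible set $\pi(Z)\subseteq U$ is dense in $T$, so the generic point of $T$ lies in $U$, and a general fibre of the dominant morphism $Z\to T$ is contained in some $\Gamma_n=(\bX_n)_R$ with $R\in U$; by \Cref{th.dns} it has dimension at most $n(r-1)-\mathrm{df}$. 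Since this general fibre has dimension $\dim Z-\dim T$, and $\dim T\le\dim\bG$, we conclude $\dim Z\le\dim\bG+n(r-1)-\mathrm{df}$. The two bounds coincide, forcing equality for each component and hence purity.

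Once purity, and with it the complete-intersection and Cohen--Macaulay properties, are in place, all hypotheses of \cite[Theorem 18.16]{Ebud95} hold at each $x\in\widetilde{\bX}$ over $U$, and flatness follows. I expect the genuinely delicate step to be the upper dimension bound for the components $Z$: one must rule out a component that maps to a proper subvariety $T\subsetneq\bG$ yet inflates its dimension through large fibres. This is exactly what the equidimensionality of $\Gamma_n$ over $U$ from \Cref{th.dns} prevents; the remaining bookkeeping is formal.
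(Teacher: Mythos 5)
Your proposal is correct and follows essentially the same route as the paper: miracle flatness via \cite[Theorem 18.16]{Ebud95}, with regularity of the base coming from smoothness of $\bG$, Cohen--Macaulayness of the total space over $U$ coming from its being a complete intersection in $U\times\bP(V)^n$, and the fiber-dimension identity coming from the definition of $U$. The only difference is that you spell out the purity-of-codimension argument (Krull's bound plus the fiber-dimension estimate over $U$) that justifies the complete-intersection claim, which the paper asserts without detail; that elaboration is sound.
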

\begin{proof}
  Once more denote $\bX_n$ by $\bX$; we indicate restriction of
  families by subscripts, as in $\bX_U$ for the restriction of
  $\pi:\bX\to \bG$ to $U\subseteq \bG$. We will apply \cite[Theorem
  18.16 (b)]{Ebud95} to the following setup.

  Let $x\in \bX_U$. Then, the theorem in question applies to the local
  rings
  \begin{equation*}
    (R,P) = (\cO_{U,\pi(x)},\mathfrak{m}_{\pi(x)})\to (\cO_{\bX,x},\mathfrak{m}_x) = (A,Q).
  \end{equation*}
  For this, we need
  \begin{itemize}
  \item $R$ to be regular, which it is, being the local ring of a point
    on a product of projective spaces.
  \item $A$ to be Cohen-Macaulay. This follows from the fact that
    $\bX_U\to U$ is a complete intersection in the family
    \begin{equation*}
      \prod_{j=1}^s \bP((V^*)^{\otimes d_j}) \times U\to U.
    \end{equation*}
  \item The dimension of the fiber $A/PA$ equals the relative
    dimension $\mathrm{dim}(A)-\mathrm{dim}(R)$; this is simply a
    paraphrase of the fact that we are restricting to the locus $U$
    where $\pi$ has fibers of the lowest possible expected dimension,
    i.e. $n(r-1)-\mathrm{df}(r,{\bf d},n)$.
  \end{itemize}
  This completes the proof.
\end{proof}

Finally, we end with the following remark that will come in handy
below, when we examine some examples.

\begin{proposition}\label{pr.hlb}
  In the setting of \Cref{th.flt}, suppose furthermore that
  $n(r-1)=\mathrm{df}$. Then, the set $W\subseteq U$ over which
  $\Gamma_n$ is reduced is open and dense.
\end{proposition}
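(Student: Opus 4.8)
The plan is to exploit the fact that, under the additional hypothesis $n(r-1)=\mathrm{df}$, the fibers of $\pi\colon\bX_U\to U$ have expected dimension zero. By \Cref{th.dns} every fiber over $U$ is zero-dimensional, and by \Cref{th.flt} the family $\bX_U\to U$ is flat; since it is also proper (the restriction to $U$ of the projection of the closed subscheme $\bX_n\subseteq\bG\times\bP(V)^n$), it is finite. Hence $\pi_*\cO_{\bX_U}$ is a locally free $\cO_U$-module of some constant rank $N$, and for each $R\in U$ the fiber $\Gamma_n$ is a finite $\kappa(R)$-scheme of length $N$. Because $\kappa(R)=\Bbbk$ is algebraically closed, such a fiber is reduced if and only if it consists of $N$ distinct reduced points, equivalently if and only if $\pi$ is \'etale over $R$.

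For openness I would identify the non-reduced locus with the image of a closed set. The non-\'etale locus of $\pi$ is the support of the coherent sheaf $\Omega_{\bX_U/U}$, which is closed in $\bX_U$; since $\pi$ is finite and hence closed, its image in $U$ is closed, and $W$ is exactly the complement of this image. (Equivalently, the trace form on the rank-$N$ algebra $\pi_*\cO_{\bX_U}$ provides a discriminant section of a line bundle on $U$ whose non-vanishing locus is $W$.) Either description exhibits $W$ as open.

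For density I would use that $U$ is irreducible, being open and dense in the irreducible scheme $\bG$, so it suffices to produce a single $R\in U$ with $\Gamma_n$ reduced. The natural candidate is the configuration already used in \Cref{le.opn}: take $f_j=\prod_{i=1}^{d_j}\ell_{i,j}$ with the hyperplanes $Z(\ell_{i,j})\subset\bP(V)$ in general position (any $r$ of them meet in the empty set, any fewer meet transversally). By \Cref{le.opn} this $R$ already lies in $U$, so only reducedness remains. Here each point $P=(a_1,\dots,a_n)\in\Gamma_n$ comes from a choice, for every pair $(i,j)$, of a factor $k(i,j)$ with $a_{i+k(i,j)}\in Z(\ell_{k(i,j),j})$; since there are exactly $\mathrm{df}=n(r-1)$ such pairs and each constrains one coordinate, every $a_m$ must lie on exactly $r-1$ of the hyperplanes. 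General position then forces $a_m$ to lie on no further hyperplane among the $\ell_{\cdot,\cdot}$, which in turn guarantees that in each equation $f_{i,j}$ exactly one factor vanishes at $P$. Thus near $P$ each $f_{i,j}$ is, up to a unit, a single linear form, and these $n(r-1)$ forms cut out $P$ transversally (again by general position, the $r-1$ forms bearing on each coordinate meet transversally there), so $\Gamma_n$ is reduced at $P$.

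The main obstacle is precisely this reducedness verification: flatness together with properness make openness essentially formal, but density hinges on exhibiting one honestly reduced fiber, and the delicate point is ruling out accidental extra vanishing of factors of the $f_{i,j}$ at the finitely many points of $\Gamma_n$. This is exactly what the general-position hypothesis on the $Z(\ell_{i,j})$ is designed to control, and the bookkeeping showing that each equation contributes a single transverse linear condition at each point is the crux of the argument.
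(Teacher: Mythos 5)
Your proof is correct, and it diverges from the paper's in an instructive way on the openness step. The paper handles openness by observing that the flat family of length-$\ell$ zero-dimensional subschemes classifies a morphism $\phi\colon U\to \mathrm{Hilb}^\ell_{\bP(V)^n}$ and then pulling back the open locus $\mathrm{Hilb}^\circ$ of $\ell$ distinct points; you instead note that $\pi\colon\bX_U\to U$ is proper with zero-dimensional fibers, hence finite, so that $\pi_*\cO_{\bX_U}$ is locally free of constant rank $N$, and reducedness of a fiber (over the algebraically closed $\Bbbk$) is equivalent to \'etaleness, whose failure locus is the closed image of $\mathrm{Supp}\,\Omega_{\bX_U/U}$ under the (closed) finite map $\pi$ --- equivalently the vanishing of the discriminant of the trace form. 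Both mechanisms are standard and correct; yours is more self-contained in that it avoids invoking the Hilbert scheme and the openness of its distinct-points stratum, at the cost of the (easy) identification of ``reduced'' with ``\'etale'' for finite $\Bbbk$-schemes. For density both arguments reduce, via irreducibility of $U$, to exhibiting one reduced fiber using the split relations $f_j=\prod_i\ell_{i,j}$ of \Cref{le.opn}; here the paper simply asserts reducedness for generic linear forms, whereas you actually prove it: the counting argument showing that the $n(r-1)$ equations force exactly $r-1$ hyperplane incidences at each coordinate, hence exactly one vanishing factor per multilinearized relation, hence a transverse intersection of linear conditions at each point of $\Gamma_n$, is a genuine (and correct) completion of a step the paper leaves to the reader.
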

\begin{proof}
  The irreducibility of $U$ means that it is sufficient to prove the
  set in question open and non-empty.
  
  Under the present hypotheses, at each point in $U$ the scheme
  $\Gamma_n$ is finite, i.e. consists of several points, some,
  perhaps, with multiplicity. The flatness result in \Cref{th.flt}
  ensures that the length $|\Gamma_n|$ is constant throughout $U$,
  counting multiplicity; we denote this common number by $\ell$.

  By the functorial description of the Hilbert scheme of points
  (e.g. \cite[p. 15]{Hart66}, \cite[Definition 2.1]{brtn-hlb} or
  \cite[Tag 0B94]{stacks-project}), the flat family $\bX_U\to U$
  entails a map $\phi:U\to \mathrm{Hilb}^\ell_{\bP(V)^n}$.

  The Hilbert scheme contains an open subscheme $\mathrm{Hilb}^\circ$
  consisting of $\ell$-tuples of {\it distinct} points in $\bP(V)^n$
  (see \cite[Proposition 2.4]{brtn-hlb} and the remarks following
  it). In conclusion, the openness and non-emptiness of $W$ will
  follow once we argue that $\phi(U)$ intersects $\mathrm{Hilb}^\circ$,
  i.e. $\Gamma_n$ is reduced for at least one point of $U$.

  To verify this last claim, note that $\Gamma_n$ will indeed be
  reduced for a generic choice of linear forms in the construction
  used in the proof of \Cref{le.opn}.
\end{proof}

\section{Examples and connections to prior work}\label{se.old}

The preceding material ties in with a number of results of similar
flavor in the literature, as we now document.

We will be focusing on algebras with the same generator-relation
pattern as the four-dimensional AS-regular ones classified in
\cite[Proposition 1.4]{LPWZ}:

\begin{itemize}
\item four generators and six relations of degree $2$;
\item three generators, two degree-two relations and two degree-three
  relations;
\item two generators and one relation in each degree $3$ and $4$.
\end{itemize}

Under the regularity assumptions of \cite{LPWZ} the Betti numbers of
these types of algebras are, respectively

\begin{itemize}
\item $1,4,6,4,1$;
\item $1,3,4,3,1$;
\item $1,2,2,2,1$. 
\end{itemize}

When applying the contents of \Cref{se.gm}, the relevant critical
dimension $n(r-1)-\mathrm{df}(r,{\bf d},n)$ becomes zero for certain $n$,
i.e. the $\Gamma_n$ in question will be non-empty finite (perhaps
reduced) schemes.

\subsection{$4$ generators, $6$ quadratic relations (type
  14641)}\label{subse.14641}

In this case, the results of \Cref{se.gm} essentially recapture the
main result of \cite{VdB88} to the effect that generically, such
algebras have $20$ point modules, counted with multiplicity.

In the absence of regularity conditions the scheme $\Gamma_2$ will be
our stand-in for the scheme of point modules, and hence the $n$ to
which \Cref{se.gm} applies here is $2$.

We thus have $r=4$ and $s=6$, and the vector space $V$ of the above
discussion is dual to the span $V^*$ of linearly independent
generators $x_1$ up to $x_4$. The scheme $\bG$ is
$\bP(V^*\otimes V^*)^6$, all $d_j$ are equal to $2$, and the defect is
$6$.

We then have

\begin{proposition}\label{pr.14641}
  Under the conventions of the present subsection, the scheme
  $\Gamma_2$ is non-empty, and the locus $U\subset \bG$ where
  $\Gamma_2$ is zero-dimensional is open and dense.

  For relation spaces $R\in U$, $\Gamma_2$ consists of twenty points,
  counted with multiplicity. These points are distinct for $R\in W$ as
  in \Cref{pr.hlb}.
\end{proposition}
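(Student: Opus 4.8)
The plan is to obtain the three assertions from the general results of \Cref{se.gm} together with one binomial-coefficient computation in the Chow ring. I would begin by recording the numerics of this subsection: with $r=4$, $s=6$, all $d_j=2$ and $n=2$, the defect is $\mathrm{df}=\sum_{j=1}^{6}(2-2+1)=6$, so the hypothesis $\mathrm{df}\le n(r-1)=6$ of \Cref{th.dns} holds and the expected dimension $n(r-1)-\mathrm{df}$ vanishes. Non-emptiness of $\Gamma_2$ and the openness and density of the locus $U$ on which $\Gamma_2$ is zero-dimensional are then immediate from parts (1) and (2) of \Cref{th.dns}, respectively.

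The core of the proposition is the number twenty. Here $\bP(V)^2\cong\bP^3\times\bP^3$ has Chow ring $\bZ[\ve_1,\ve_2]/(\ve_1^4,\ve_2^4)$, and since each relation has degree $d_j=2$ with $n-d_j=0$, the only multilinearizations are the six forms $f_{0,j}$, each of class $\ve_1+\ve_2$ by \Cref{eq:clss}. I would compute $(\ve_1+\ve_2)^6$ modulo $(\ve_1^4,\ve_2^4)$: only the middle term survives, yielding $\binom{6}{3}\ve_1^3\ve_2^3=20\,\ve_1^3\ve_2^3$, and $\ve_1^3\ve_2^3$ is the class of a single point, so the intersection number is $20$. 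The step requiring the most care is the passage from this intersection number to the length of $\Gamma_2$ counted with multiplicity. For $R\in U$ the six hypersurfaces meet in the expected codimension $6$, so $\Gamma_2$ is a zero-dimensional complete intersection; its local rings are therefore Cohen--Macaulay, and the higher Tor-terms measuring the discrepancy between the intersection product and the naive length vanish. Hence the computed intersection number $20$ equals $\sum_{p\in\Gamma_2}\mathrm{length}_p(\Gamma_2)$ (e.g.\ \cite[\S 8.2]{Fulton-2nd-ed-98}), which is exactly the sense in which $\Gamma_2$ ``consists of twenty points counted with multiplicity.'' The flatness of $\bX_U\to U$ from \Cref{th.flt} gives the reassuring consistency check that this common length is constant across $U$.

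The final, distinctness, claim is a direct application of \Cref{pr.hlb}: here $n(r-1)=6=\mathrm{df}$, so the hypothesis of that proposition is satisfied and it furnishes an open dense subset $W\subseteq U$ over which $\Gamma_2$ is reduced. Since $\Gamma_2$ is a zero-dimensional scheme of length $20$, reducedness over $W$ is precisely the statement that it consists of twenty distinct points there.
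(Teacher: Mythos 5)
Your proposal is correct and follows essentially the same route as the paper: the general statements are quoted from \Cref{th.dns}, \Cref{th.flt} and \Cref{pr.hlb}, and the count of twenty is obtained from $(\ve_1+\ve_2)^6=20\,\ve_1^3\ve_2^3$ in $A^*(\bP^3\times\bP^3)$ together with the Cohen--Macaulay/proper-intersection criterion of \cite[\S 8.2]{Fulton-2nd-ed-98} identifying the intersection product with the length of $\Gamma_2$. The only difference is cosmetic: the paper additionally offers an example-based alternative (flatness plus Van den Bergh's explicit example with $|\Gamma_2|=20$), whereas you use flatness only as a consistency check.
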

\begin{proof}
  Everything but the claim about the count of $20$ is an immediate
  application of \Cref{th.dns,th.flt,pr.hlb}.

  As for the count itself, it follows from the fact that examples with
  $|\Gamma_2|=20$ exist, as first constructed in \cite{VdB88} (see
  also \cite{SV07,ChV,CS15-2} and references therein) together with
  flatness; the latter ensures the constancy of the degree throughout
  the open parameter family $U$.

  Alternatively, one can avoid having to handle any examples at all by
  resorting to a Chow ring-based argument: $A^*(\bP(V)^2)$ is in this
  case isomorphic to
  \begin{equation*}
    \bZ[\ve_1]/(\ve_1^4) \otimes \bZ[\ve_2]/(\ve_2^4)
  \end{equation*}
  and each bilinearization of a relation cuts out a hypersurface
  $V_i$, $1\le i\le 6$ of class $\ve_1+\ve_2$. Since $\ve_i^4=0$, this
  then implies that the product of the Chow classes $[V_i]$ is
  \begin{equation}\label{eq:ch-prod}
    (\ve_1+\ve_2)^6 = 20~ \ve_1^3\ve_2^3.
  \end{equation}
  On the other hand, \cite[Example~8.2.1]{Fulton-2nd-ed-98}
  implies that the product \Cref{eq:ch-prod} is the
  Chow class of the scheme-theoretic intersection $\bigcap V_i$.
  The assumption of the mentioned result is ensured by \cite[Example~8.2.7]{Fulton-2nd-ed-98}
  and the fact that each $V_{i}$ is Cohen-Macaulay.
  Since $\ve_1^3\ve_2^3$ is the Chow class of a point, this means that said
  intersection consists of twenty points with multiplicity.
\end{proof}

\begin{remark}\label{re.chw}
  It is the second proof of $|\Gamma_2|=20$ given above that would
  presumably be more portable and flexible, as it is available even
  when $\Gamma_n$ is not zero-dimensional. We will treat such a case
  in \cref{pr.12221.4}.
\end{remark}

\begin{remark}\label{re.gor}
	The number $n$ such that the critical dimension becomes zero is equal to $\ell-2$, where $\ell$ is the Gorenstein parameter of a four-dimensional AS-regular algebra of the same generator-relation pattern. Indeed, in the proof of \cite[Proposition 1.4]{LPWZ}, it is observed that the AS-regular algebras considered there have Hilbert series $1/p(t)$, where $p(t)$ has zero at $t=1$ with multiplicity $\geq 3$. In our terminology, $p(1)=0$ implies that $s=2r-2$ and $p'(1)=0$ implies that the sum of $d_j$'s is $(r-1)\ell$. Thus the defect is
	\begin{equation*}
		\sum_{j=1}^{s}(n-d_{j}+1)=(2n+2-\ell)(r-1)
	\end{equation*}
	and it is equal to $n(r-1)$ if and only if $n=\ell-2$.
\end{remark}

\subsection{$3$ generators, quadratic and cubic relations (type
  13431)}\label{subse.13431}

We now tackle the second bullet point listed at the beginning of the
present section, corresponding to three-generator algebras with two
quadratic and two cubic relations. We will then be studying $\Gamma_3$
(i.e. here $n=3$).

\begin{proposition}\label{pr.13431}
  Under the conventions of the present subsection, the scheme
  $\Gamma_3$ is non-empty, and the locus $U\subset \bG$ where
  $\Gamma_3$ is zero-dimensional is open and dense.

  For relation spaces $R\in U$, $\Gamma_3$ consists of nineteen
  points, counted with multiplicity. These points are distinct for
  $R\in W$.
\end{proposition}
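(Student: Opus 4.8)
The plan is to mirror the proof of \Cref{pr.14641} closely, since the two propositions share the same logical skeleton and differ only in the final arithmetic. First I would fix the numerology: here $r=3$, so $\bP(V)=\bP^2$ and every $\ve_i$ satisfies $\ve_i^3=0$; the degree tuple is $\mathbf{d}=(2,2,3,3)$; and $n=3$. The defect is then
\begin{equation*}
  \mathrm{df}(3,\mathbf{d},3)=2\cdot(3-2+1)+2\cdot(3-3+1)=4+2=6=n(r-1),
\end{equation*}
confirming that the expected dimension $n(r-1)-\mathrm{df}$ vanishes. Consequently the qualitative half of the statement---nonemptiness of $\Gamma_3$, openness and density of the locus $U$ on which $\Gamma_3$ is zero-dimensional, and distinctness of the points over $W\subseteq U$---is an immediate application of \Cref{th.dns,th.flt,pr.hlb}, exactly as in \Cref{pr.14641}.

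The real content is the count of nineteen. I would carry it out in
\begin{equation*}
  A^*(\bP(V)^3)\cong\bZ[\ve_1,\ve_2,\ve_3]/(\ve_1^3,\ve_2^3,\ve_3^3).
\end{equation*}
By the class formula \Cref{eq:clss}, each quadratic relation contributes the two classes $\ve_1+\ve_2$ (from $i=0$) and $\ve_2+\ve_3$ (from $i=1$), while each cubic relation contributes the single class $\ve_1+\ve_2+\ve_3$ (from $i=0$). With two relations of each kind, the class of $\Gamma_3$ is the product
\begin{equation*}
  \bigl[(\ve_1+\ve_2)(\ve_2+\ve_3)\bigr]^{2}(\ve_1+\ve_2+\ve_3)^{2},
\end{equation*}
and I would extract the coefficient of the point class $\ve_1^2\ve_2^2\ve_3^2$. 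A direct expansion, reducing by $\ve_i^3=0$ at each stage, yields that coefficient as $19$, which is the desired count with multiplicity.

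To upgrade this coefficient from a mere bound to the honest multiplicity-count of the scheme-theoretic intersection $\bigcap_{i,j}V_{i,j}$, I would reuse the justification of \Cref{pr.14641} unchanged: \cite[Example~8.2.1]{Fulton-2nd-ed-98} identifies the product of the $[V_{i,j}]$ with the class of the intersection, its hypothesis being furnished by \cite[Example~8.2.7]{Fulton-2nd-ed-98} together with the Cohen--Macaulayness of each hypersurface $V_{i,j}$. The only obstacle here is clerical: unlike the fully symmetric $(\ve_1+\ve_2)^6$ of the 14641 case, this product is asymmetric in $\ve_1,\ve_2,\ve_3$, so I would organize the bookkeeping by expanding $(\ve_1+\ve_2)^2(\ve_2+\ve_3)^2$ first and then pairing each surviving degree-four monomial with the complementary degree-two monomial of $(\ve_1+\ve_2+\ve_3)^2$ needed to complete $\ve_1^2\ve_2^2\ve_3^2$, summing the products of their coefficients.
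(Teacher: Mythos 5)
Your proposal is correct and follows exactly the same route as the paper's proof: reduce the qualitative claims to \Cref{th.dns,th.flt,pr.hlb} as in \Cref{pr.14641}, then extract the coefficient of $\ve_1^2\ve_2^2\ve_3^2$ in $(\ve_1+\ve_2)^2(\ve_2+\ve_3)^2(\ve_1+\ve_2+\ve_3)^2$, which is indeed $19$. You simply spell out the bookkeeping (and the appeal to \cite[Examples~8.2.1 and~8.2.7]{Fulton-2nd-ed-98}) that the paper leaves implicit by deferring to the proof of \Cref{pr.14641}.
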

\begin{proof}
  The proof is entirely parallel to that of \Cref{pr.14641}, only the
  count requiring modification.
  
  This time, the relevant Chow ring is
  \begin{equation*}
    A^*(\bP^2\times\bP^2\times\bP^2)\cong \bigotimes_{i=1}^{3} \bZ[\ve_i]/(\ve_i^3)
  \end{equation*}
  and the class of $\Gamma_3$ is the coefficient of
  $\ve_1^2\ve_2^2\ve_3^2$ in
  \begin{equation*}
    (\ve_1+\ve_2)^2(\ve_2+\ve_3)^2(\ve_1+\ve_2+\ve_3)^2.
  \end{equation*}
  This is easily seen to be $19$ by direct computation.
\end{proof}

\subsection{$2$ generators, cubic and quartic relations (type
  12221)}\label{subse.12221}

This case is what in fact motivated the present note, and corresponds
to the third bullet point the prefatory discussion to the present
section.

This investigation is a follow-up to \cite{cks} (in turn inspired by
\cite{LPWZ}), and was prompted by our learning belatedly of the thesis
\cite{brz}, where some of the algebras of interest here are
studied. Specifically, the following result resolves \cite[Conjecture
IV.8.1]{brz} (i.e. \Cref{cj.brz}) in the affirmative.

\begin{proposition}\label{pr.12221}
  Under the conventions of the present subsection, the scheme
  $\Gamma_5$ is non-empty, and the locus $U\subset \bG$ where
  $\Gamma_5$ is zero-dimensional is open and dense.

  For relation spaces $R\in U$, $\Gamma_5$ consists of seventeen
  points, counted with multiplicity. The points are distinct for
  $R\in W$.
\end{proposition}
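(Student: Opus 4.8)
The plan is to follow the template set by \Cref{pr.14641,pr.13431} verbatim up to the final numerical count, which is the only part requiring genuine computation. Here $r=2$, $\mathbf{d}=(3,4)$, $s=2$ and $n=5$, so the defect is $\mathrm{df}=(5-3+1)+(5-4+1)=5$ and the critical dimension $n(r-1)-\mathrm{df}=5-5=0$ vanishes, placing us in the zero-dimensional regime. Thus the non-emptiness of $\Gamma_5$, the openness and density of the locus $U$ where $\Gamma_5$ is zero-dimensional, and the distinctness of the points over $W$ are all immediate consequences of \Cref{th.dns,th.flt,pr.hlb}, exactly as in the two preceding propositions. What remains is to pin down the common length of $\Gamma_5$ over $U$.

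For the count I would work in the Chow ring of $\bP(V)^5=(\bP^1)^5$, which by the K\"unneth decomposition used in \Cref{th.dns} is
\begin{equation*}
  A^*\bigl((\bP^1)^5\bigr)\cong\bigotimes_{i=1}^{5}\bZ[\ve_i]/(\ve_i^2).
\end{equation*}
The multilinearizations of the cubic relation contribute the three classes $\ve_{i+1}+\ve_{i+2}+\ve_{i+3}$ for $0\le i\le 2$, and those of the quartic relation contribute the two classes $\ve_{i+1}+\ve_{i+2}+\ve_{i+3}+\ve_{i+4}$ for $0\le i\le 1$. The class of $\Gamma_5$ is then the coefficient of the point class $\ve_1\ve_2\ve_3\ve_4\ve_5$ in the product
\begin{equation*}
  (\ve_1+\ve_2+\ve_3)(\ve_2+\ve_3+\ve_4)(\ve_3+\ve_4+\ve_5)(\ve_1+\ve_2+\ve_3+\ve_4)(\ve_2+\ve_3+\ve_4+\ve_5).
\end{equation*}
Because every $\ve_i^2=0$, extracting this coefficient amounts to counting the systems of distinct representatives for the five index sets $\{1,2,3\}$, $\{2,3,4\}$, $\{3,4,5\}$, $\{1,2,3,4\}$, $\{2,3,4,5\}$, i.e. to evaluating the permanent of the corresponding $5\times 5$ incidence matrix.

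The identification of this coefficient with the actual scheme-theoretic length of $\Gamma_5$ is secured exactly as in \Cref{pr.14641}, via \cite[Examples~8.2.1 and~8.2.7]{Fulton-2nd-ed-98} together with the Cohen-Macaulayness of each hypersurface cut out by a single multilinearization. The only real work, then, is the permanent evaluation, which I expect to be the sole (mild) obstacle: a Laplace-style expansion along the variable $\ve_1$ (which appears only in the first and fourth factors) splits the count into the subcase $\ve_1\mapsto$ first factor and the subcase $\ve_1\mapsto$ fourth factor, each of which is resolved by further expansion, with the two contributions summing to $10+7=17$. This yields a common length of $17$, completing the proof.
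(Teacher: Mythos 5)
Your proposal is correct and follows essentially the same route as the paper: reduce everything but the count to \Cref{th.dns,th.flt,pr.hlb}, then compute the coefficient of $\ve_1\ve_2\ve_3\ve_4\ve_5$ in the product $(\ve_1+\ve_2+\ve_3)(\ve_2+\ve_3+\ve_4)(\ve_3+\ve_4+\ve_5)(\ve_1+\ve_2+\ve_3+\ve_4)(\ve_2+\ve_3+\ve_4+\ve_5)$ in $\bigotimes_{i=1}^5\bZ[\ve_i]/(\ve_i^2)$. Your permanent/system-of-distinct-representatives bookkeeping, with the split $10+7=17$, is simply an explicit organization of the ``direct computation'' the paper leaves to the reader, and the count checks out.
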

\begin{proof}
  Once more, the argument is precisely parallel to those of
  \Cref{pr.14641,pr.13431}, except for inessential numerical
  differences in the last portion of the proof.

  The Chow ring to consider here is
  \begin{equation*}
    A^*((\bP^1)^5) \cong \bigotimes_{i=1}^5 \bZ[\ve_i]/(\ve_i^2),
  \end{equation*}
  and the sought-after degree is the coefficient of $\prod\ve_i$ in
  \begin{equation*}
    (\ve_1+\ve_2+\ve_3)(\ve_2+\ve_3+\ve_4)(\ve_3+\ve_4+\ve_5)(\ve_1+\ve_2+\ve_3+\ve_4)(\ve_2+\ve_3+\ve_4+\ve_5);
  \end{equation*}
  this is indeed $17$.
  
  Alternatively, we can repeat example-based argument at the end of
  the proof of \Cref{pr.14641}: the family $\bX_U\to U$ is flat, and
  we know that its fiber has degree seventeen for at least one point
  in $U$ via the examples in \cite[Chapter V]{brz}. Flatness then
  ensures that the degree is seventeen throughout $U$.
\end{proof}

For the type of algebras of this subsection, we can apply our general
result also to $n=4$. In this case the expected dimension of
$\Gamma_{4}$ is one.

Recall (e.g. \cite[$\S$2.1.1]{clz}) that the {\it multidegree} of a
closed subscheme $Y$ of a product $\bP=(\mathbb{P}^{r-1})^{\times n}$ of
projective spaces consists of the tuple of cardinalities (with
multiplicities)
\begin{equation}\label{eq:mdg}
  \left|Y\cap \bigcap_{i=1}^{\dim Y} H_i\right|
\end{equation}
for generic choices of hypersurfaces in $\bP$ obtained as zeros of a
single linear form on one of the factors $\bP^{r-1}$ of $\bP$.

When $Y$ is a curve the codimension in \Cref{eq:mdg} is one, and hence
we can express the multidegree simply as a sequence of $n$
non-negative integers
\begin{equation*}
  |Y\cap H_i|, 1\le i\le n
\end{equation*}
where
\begin{equation*}
  H_i = (\bP^{r-1})^{\times (i-1)}\times Z(\ell_i) \times (\bP^{r-1})^{\times (n-i)}
\end{equation*}
for generic linear forms $\ell_i$.

\begin{proposition}\label{pr.12221.4}
  Under the conventions of the present subsection, the scheme
  $\Gamma_4$ is non-empty, and the locus $U\subset \bG$ where
  $\Gamma_4$ is one-dimensional is open and dense.

  For relation spaces $R\in U$, $\Gamma_4$ has multidegree
  $(4,3,3,4)$.
\end{proposition}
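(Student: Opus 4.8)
The plan is to run the argument of \Cref{pr.14641,pr.13431} essentially verbatim for the qualitative statements, and then to read off the multidegree from a product of Chow classes. Here $r=2$, $s=2$ and $\mathbf{d}=(3,4)$, so $\bP(V)^n=(\bP^1)^4$, the defect is $\mathrm{df}=(4-3+1)+(4-4+1)=3$, and the expected dimension $n(r-1)-\mathrm{df}=4-3=1$ is that of a curve. Non-emptiness of $\Gamma_4$ together with the openness and density of the locus $U$ on which $\Gamma_4$ is one-dimensional is then immediate from \Cref{th.dns}, while \Cref{th.flt} makes $\bX_U\to U$ flat, so the multidegree of $\Gamma_4$ is constant over $U$ and it suffices to compute it once.

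To compute it I would work in the Chow ring $A^*((\bP^1)^4)\cong\bigotimes_{i=1}^4\bZ[\ve_i]/(\ve_i^2)$. The three multilinearized relations---two from the cubic (classes $\ve_1+\ve_2+\ve_3$ and $\ve_2+\ve_3+\ve_4$) and one from the quartic (class $\ve_1+\ve_2+\ve_3+\ve_4$)---cut out hypersurfaces whose intersection is $\Gamma_4$. Exactly as in \Cref{pr.14641}, combining \cite[Examples~8.2.1 and~8.2.7]{Fulton-2nd-ed-98} with the Cohen--Macaulayness of each hypersurface and with the fact that over $U$ the intersection is proper (of the expected dimension $1$), one identifies $[\Gamma_4]$ with the product
\begin{equation*}
  (\ve_1+\ve_2+\ve_3)(\ve_2+\ve_3+\ve_4)(\ve_1+\ve_2+\ve_3+\ve_4)\in A^3((\bP^1)^4).
\end{equation*}

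Since $[H_i]=\ve_i$ and $\ve_i^2=0$, the $i$-th multidegree component $|\Gamma_4\cap H_i|$ is, for generic $\ell_i$, the coefficient of the complementary squarefree monomial $\prod_{k\ne i}\ve_k$ in $[\Gamma_4]$. Expanding the product modulo the $\ve_i^2$ gives
\begin{equation*}
  [\Gamma_4]=4\,\ve_1\ve_2\ve_3+3\,\ve_1\ve_2\ve_4+3\,\ve_1\ve_3\ve_4+4\,\ve_2\ve_3\ve_4,
\end{equation*}
and reading off the coefficients of $\ve_2\ve_3\ve_4,\ \ve_1\ve_3\ve_4,\ \ve_1\ve_2\ve_4,\ \ve_1\ve_2\ve_3$ in turn yields the multidegree $(4,3,3,4)$. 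As a sanity check, the order-reversing symmetry $\ve_i\leftrightarrow\ve_{5-i}$ swaps the two cubic classes and fixes the quartic one, hence fixes $[\Gamma_4]$ and forces the palindromic shape $d_i=d_{5-i}$.

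The only genuinely substantive step---the rest being the quoted theorems, a short expansion, or a direct citation---is the identification of the class of $\Gamma_4$ with the product of the three relation classes. This is the passage from the refined intersection product to the class of the scheme-theoretic intersection, and it is licensed by Fulton's results precisely because restricting to $U$ makes the intersection proper; the one point to confirm is that the mechanism already used for the zero-dimensional $\Gamma_2$ in \Cref{pr.14641} goes through unchanged when the intersection is a curve rather than a finite scheme.
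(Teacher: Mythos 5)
Your proposal is correct and follows essentially the same route as the paper: invoke \Cref{th.dns} and \Cref{th.flt} for the qualitative statements, identify $[\Gamma_4]$ with the product $(\ve_1+\ve_2+\ve_3)(\ve_2+\ve_3+\ve_4)(\ve_1+\ve_2+\ve_3+\ve_4)$ via the same Fulton-based mechanism used for \Cref{pr.14641}, and read the multidegree $(4,3,3,4)$ off the coefficients of the expansion $4\ve_1\ve_2\ve_3+3\ve_1\ve_2\ve_4+3\ve_1\ve_3\ve_4+4\ve_2\ve_3\ve_4$. The extra explanations (why coefficients give the multidegree, the palindromic symmetry check) are correct embellishments rather than a different argument.
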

\begin{proof}
	The proof is similar to that of \cref{pr.14641}, but now we consider the Chow ring
	\begin{equation*}
		A^*((\bP^1)^4) \cong \bigotimes_{i=1}^4 \bZ[\ve_i]/(\ve_i^2),
	\end{equation*}
	and compute the product
	\begin{equation*}
		(\ve_1+\ve_2+\ve_3)(\ve_2+\ve_3+\ve_4)(\ve_1+\ve_2+\ve_3+\ve_4).
	\end{equation*}
	The result is
	\begin{equation*}
		4\ve_{1}\ve_{2}\ve_{3}+3\ve_{1}\ve_{2}\ve_{4}+3\ve_{1}\ve_{3}\ve_{4}+4\ve_{2}\ve_{3}\ve_{4}.
	\end{equation*}
	The same argument as the latter part of the proof of
        \cref{pr.14641} implies that this is the Chow class of
        $\Gamma_{4}$.

        Finally, the last statement follows from the fact that, as
        explained in \cite[$\S$2.1.1]{clz}, the multidegree can be
        read off as the tuple of coefficients of the Chow class.
\end{proof}



\bibliographystyle{plain}

\def\cprime{$'$}

\end{document}